 \newtheoremstyle{mythmsty}{15pt}{5pt}{\it}{0pt}{\textbf}{}{0pt}{\textbf{(\thmnumber{#2})} \thmname{#1} \thmnote{ #3}
}
 \theoremstyle{mythmsty}
 \newtheorem{thm}[paragraph]{\bf Theorem}
 \newtheorem{cor}[paragraph]{\bf Corollary}
 \newtheorem{prop}[paragraph]{\bf Proposition}
 \newtheorem{lemma}[paragraph]{\bf Lemma}
\newtheorem{defn}[paragraph]{\bf Definition}
\newtheorem{theorem}[paragraph]{\bf Theorem}
\newtheorem{proposition}[paragraph]{\bf Proposition}
 \newenvironment{thm*}{\bf Theorem \it}{}
 \newenvironment{cor*}{\bf Corollary \it}{}
 \newenvironment{definition*}{\bf Definition \it}{}
 \newenvironment{prop*}{\bf Proposition \it}{}
 \newenvironment{lemma*}{\bf Lemma \it}{}
 \newenvironment{fact*}{\bf Fact \it}{}
 \newenvironment{rmk*}{\bf Remark \it}{}
 \newenvironment{exercise*}{\bf Exercise \it}{}
 \newenvironment{solution*}{\underline{Solution}\quad\rm}
  \numberwithin{equation}{subsubsection}
\newcommand{\mysec}[1]{\noindent \section{#1} \label{#1}}
\newcommand{\mysubsec}[1]{\noindent \subsection{#1} \label{#1}}
\newcommand{\p}[1]{\noindent\subsubsection{}\label{#1}\hskip -.1455in\textbf{)}}
\newcommand{\rf}[1]{(\ref{#1})}
\DeclareMathOperator{\Spec}{Spec}
\newcommand{\ang}[1]{\langle #1 \rangle}
\newcommand{\blank}{\mbox{$\underline{\makebox[10pt]{}}$}}
\DeclareMathOperator{\End}{End}
\DeclareMathOperator{\Hom}{Hom}
\DeclareMathOperator{\Id}{Id}
\DeclareMathOperator{\Ob}{Ob}
\DeclareMathOperator{\Spc}{Spc}
\newcommand{\mc}{\mathcal}
\newcommand{\sO}{\mc{O}}
\newcommand{\PP}{\mathbb P}
\title{\bf Prime spectra of derived quiver representations}
\author{{\small \textsc{ Yu-Han Liu and Susan J. Sierra}}}
\begin{document}
\date{\today}
\maketitle

\setcounter{tocdepth}{2}

\setcounter{section}{0}
\setcounter{subsection}{1}

\mysec{Introduction}

\mysubsec{Introduction}

\p{}  For every essentially small tensor triangulated category $T$, Balmer \cite{balmer_spectrumtt} defined functorially a ringed space $\mathrm{Spec}(T)$, called its prime spectrum.  It has been shown in some cases that the construction $T \mapsto \mathrm{Spec}(T)$ does not lose information in the sense that the category $T$ can be reconstructed from $\mathrm{Spec}(T)$.  For example, when $T_X=D(X)_\mathrm{parf}$ is the tensor triangulated category of perfect complexes on a scheme $X$, there is a natural morphism \[X\longrightarrow \mathrm{Spec}(T_X)\] of ringed spaces.  This comparison map is an isomorphism when $X$ is \emph{topologically noetherian}; see \cite[Theorem~6.3]{balmer_spectrumtt}.

\p{}  Not surprisingly,  Balmer's construction loses a great deal of information when applied to noncommutative rings, as we show in this note.  We give an alternate construction, showing how to construct an  algebra from a tensor triangulated category.  In some cases, our construction recovers much more information than Balmer's.

\p{}  Fix a field $k$.  Let $Q$ or $(Q,R)$ be a quiver \emph{with relations}; for legibility we will omit the relations $R$ from the notation $(Q,R)$ whenever convenient.  Let $Q\mathrm{-Rep}_k$ be its abelian category of finite dimensional representations over $k$.  Under suitable conditions on $R$ (see \rf{par:unital}), this category is equipped with a natural \emph{vertex-wise} tensor product which is an exact functor in each factor.  Its bounded derived category $D(Q):=D^b(Q\mathrm{-Rep})$ is then a tensor triangulated category when the relations satisfy  \rf{par:unital}.  We will describe $\mathrm{Spec}(D(Q))$ in the case when $Q$ is finite and ordered (that is, without non-trivial oriented cycles). 

It turns out that the ringed space $\mathrm{Spec}(D(Q))$ is not enough to recover $Q$, even in the case of quivers without relations; see \rf{par:examples}.  Moreover, $\mathrm{Spec}(D(Q))$ is not (yet) a quiver, hence we do not expect a comparison map \[Q\longrightarrow \mathrm{Spec}(D(Q)).\]

We show, however, that one can still recover $Q$ (or, more accurately, its path algebra) from the tensor triangulated category $D(Q)$, at least in the case that $Q$ is finite and ordered.  

\p{par:Pm}  These results were motivated in part by the fact that the derived categories of some projective varieties are equivalent to derived categories of quiver representations.   Consider the triangulated category $D(\mathbb P^m)=D^b(\mathrm{Coh}(\mathbb P^m))$.  It is known to be equivalent to the category $D(S_m)$, where $S_m$ is the quiver described in \cite[Example~5.3 and Example~6.4]{bondal_repassalgandcoh}: it has $m+1$ ordered vertices and $m+1$ arrows between consecutive vertices, with commutativity relations (see \rf{par:unital}).  Hence we have two different tensor products on the same triangulated category $T=D(\mathbb P^m)\cong D(S_m)$: one from the sheaf tensor product on $\mathbb P^m$ and the other the quiver tensor product.  We will see that the two tensor products give very different spectra.

{\bf Acknowledgements.}\;   The authors thank Ryan Kinser and Michael Wemyss for their helpful comments.  The second author was partially supported by an NSF Postdoctoral Research Fellowship, grant DMS-0802935.  

\mysubsec{Conventions and preliminary facts about quivers}

\p{} For a field $k$, $\mathbf{TT}_k$ denotes the category of (essentially small) $k$-linear tensor triangulated categories.  Morphisms in this category are exact functors preserving the tensor products and unit objects.

\p{}  If the base field $k$ is fixed,  we denote $\mathrm{Vect}_k$ simply by $\mathrm{Vect}$, and similarly $\mathbf{TT}_k$ by $\mathbf{TT}$ and $Q\mathrm{-Rep}_k$ by $Q\mathrm{-Rep}$.

\p{}   For any collection $M$ of objects in a tensor triangulated category $T$, $\langle M\rangle$ denotes the smallest \emph{tensor ideal} containing $S$.  (For us, a tensor ideal is a full, thick, triangulated subcategory $J$ of $T$ that satisfies $V \otimes W \in J$ for any $V \in J$ and $W \in T$.)  A tensor ideal $P$ is {\em prime} if $V \otimes W \in P$ implies that either $V \in P$ or $W \in P$.  

\p{par:quivers}  For definitions concerning quivers, see \cite{auslander_repartinalg}.  Let $v \in Q_0$. For any quiver representation $V$ we denote by $V_v$ the vector space associated to the vertex $v$. We take however the following convention that is different from \cite{auslander_repartinalg}:  We identify $Q\mathrm{-Rep}$ with the category of \emph{right} modules over the path algebra $kQ$, and take the corresponding convention on multiplication in $kQ$; for example, $e_m p = p$ for any path \emph{beginning} at the vertex $m$.  

 We consider mainly \emph{finite ordered quivers}, which means there are only finitely many vertices and arrows, and there are no oriented cycles.  In this case we denote the vertices by integers $n=1, 2, \ldots$, ordered in such a way that there are non-trivial paths from $n$ to $m$ only when $n<m$.
 
\p{par:unital}  We impose the condition on the relations $R$ of $(Q,R)$ that the the subcategory $(Q,R)\mathrm{-Rep}$ is a monoidal subcategory of $(Q,\emptyset)\mathrm{-Rep}$.  More explicitly, we require that the unit object $U$ in $(Q,\emptyset)\mathrm{-Rep}$ (over any field) lies in the subcategory $(Q,R)\mathrm{-Rep}$, and $(Q,R)\mathrm{-Rep}$ is closed under the quiver tensor product.  

We call relations involving only non-trivial paths satisfying this condition \emph{tensor relations}.

For example, relations generated by ``commutativity relations'' are tensor.  This means that there are relations $p_i-q_i\in R$, where $p_i$ and $q_i$ are non-trivial paths with the same source and target, such that every relation in $R$ is of the form $r=\sum \lambda_i(p_i-q_i)$ with $\lambda_i\in k$.  

\p{par:full}  For any quiver $Q$, a \emph{full subquiver} $Q'$ is a quiver with $Q'_0 \subseteq Q_0$ and $Q'_1 \subseteq Q_1$, that further inherits all arrows between vertices:  that is, for any arrow $a \in Q_1$ between vertices $n, m \in Q'_0$, we have $a \in Q'_1$.

\p{par:subq}  Let $Q$ be a quiver and $R$ a set of (not necessarily tensor) relations, which we assume to be an ideal in the path algebra $kQ$; let $Q'$ be a full subquiver of $Q$.  Denote by $f: Q'\rightarrow Q$ the inclusion map.  Then there is an exact \emph{restriction functor} \[f^*:Q\mbox{-Rep}\rightarrow Q'\mbox{-Rep}.\]

Denote by $R\cap Q'$ the subset of $R$ consisting of relations all of whose paths are in $Q'$; we view $R\cap Q'$ as a set of relations on $Q'$.  Then the image under $f^*$ of the subcategory $(Q,R)$-Rep is contained in $(Q',R\cap Q')$-Rep, and we have an induced exact functor \[f^*:(Q,R)\mbox{-Rep}\longrightarrow (Q',R\cap Q')\mbox{-Rep}.\]

We will need conditions under which its derived functor is full and essentially surjective.  To this end it is convenient to find a right-inverse.  Consider the exact \emph{extension by zero} functor \[f_*:Q'\mbox{-Rep}\rightarrow Q\mbox{-Rep}.\]  Observe that this functor respects the vertex-wise tensor product, but does not preserve the unit object in general.

Denote by $\bar R$ the set of relations on $Q'$ obtained by setting every arrow occurring in $R$ but not in $Q'$ to be zero.  Then the image under $f_*$ of $(Q',R\cap Q')$-Rep is contained in $(Q,R)$-Rep whenever $R\cap Q'=\bar R$.  (Notice that the containment $\subseteq$ always holds.)

We say that $R$ and $Q'$ are \emph{compatible} if the equality $R\cap Q'=\bar R$ holds (equivalently, $R \cap Q \supseteq \bar R$), in which case we have 

\centerline{\xymatrix{(Q,R)\mbox{-Rep}\ar@/^/[r]^-{f^*} & (Q',R')\mbox{-Rep},\ar@/^/[l]^-{f_*}  }}

where $R':=R\cap Q'=\bar R$.  These functors satisfy $f^*f_*=\mathrm{Id}$ on $(Q',R')$-Rep.

\p{par:qex}  For example if $Q$ is the quiver 
$\xymatrix@R=10pt{& 2\ar[dr]^{b} & \\ 1\ar[ur]^{a}\ar[dr]_{c} && 4, \\ & 3\ar[ur]_{d}}$

$R=(ab-cd)$, and $Q'$ is the full subquiver $1\stackrel{a}{\longrightarrow} 2 \stackrel{b}{\longrightarrow} 4$, then $\bar R=\{ab\}$ and $R\cap Q'=\emptyset$.  Hence $R$ and $Q'$ in this example are not compatible.

On the other hand, notice that if $R$ has only non-trivial paths and $Q'$ consists of only one vertex then $R$ and $Q'$ are always compatible.

\p{par:tensorr}  Let $(Q,R)$ be a quiver with tensor relations, and $Q'$ a full subquiver compatible with $R$; let $R':=R\cap Q'=\bar R$.  We claim that $R'$  also consists of tensor relations.  First, $f^*$ sends the unit object $U$ in $Q$-Rep to the unit object $U'$ in $Q'$-Rep.  By assumption $U\in (Q,R)$-Rep, and so $U' = f^*(U)\in (Q',R')$-Rep.

Next, let $V',W'\in (Q',R')$-Rep; we need to show that $V'\otimes W'\in (Q',R')$-Rep too.  Since $f^*$ respects tensor products, we have \[V'\otimes W'=f^*f_*(V')\otimes f^*f_*(W')=f^*(f_*(V')\otimes f_*(W')).\]  But $f_*(V'), f_*(W')\in (Q,R)$-Rep, and so by assumption $f_*(V')\otimes f_*(W')\in (Q,R)$-Rep, whose image $V'\otimes W'$ under $f^*$ then lies in $(Q',R')$-Rep, as required.

\p{par:qwr}  Note that the full abelian subcategory $(Q,R)$-Rep of $Q$-Rep is in general not closed under extensions.  In particular there is an inclusion functor $D(Q,R)\rightarrow D(Q)$, but $D(Q,R)$ is not  a triangulated subcategory of $D(Q)$.  This inclusion  is an exact tensor functor (under the condition in \rf{par:unital}) that is injective on objects, but is not full in general:  For example, consider the quiver $Q$ with two vertices and two arrows $a$ and $b$ both going from one vertex to the other.  Let $R:=(a-b)$.  Then both simple objects in $Q$-Rep lie in $(Q,R)$-Rep, but most of their extensions do not lie in $(Q,R)$-Rep.

In general, all the simple objects of $Q$-Rep lie in $(Q,R)$-Rep, since all arrows act as the zero map in a simple representation.  

\p{par:filtration}  Let $(Q,R)$ be a finite ordered quiver with tensor relations (in particular, by the definition given in \rf{par:unital}, $R$ involves only non-trivial paths); let $q$ be the number of its vertices.  Denote its vertices by $1, 2, \ldots, q$ so that there exists a non-trivial path from $n$ to $m$ only when $n<m$.  (There is in general more than one such way to label the vertices.)
 
Let $Q_\ell$ be the full subquiver with vertices $\ell, \ell+1, \dots, q$, for every $\ell\leq q$.

\begin{lemma}\label{lem:comp}  $Q_\ell$ and $R$ are compatible for every $\ell$.  \end{lemma}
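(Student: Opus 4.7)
The plan is to establish the non-trivial containment $\bar R \subseteq R \cap Q_\ell$; the reverse inclusion is immediate since any relation already supported on $Q_\ell$ is unchanged by the zeroing operation. So the task reduces to showing that for every $r \in R$, the truncation $\bar r$ (obtained by setting to zero every arrow not in $Q_\ell$) still lies in $R$, since $\bar r$ automatically uses only arrows of $Q_\ell$ by construction.

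First I would exploit the two-sided ideal structure of $R$ in $kQ$ to decompose $r$ by source-target pairs, using the vertex idempotents $e_v$. Setting $r_{s,t} := e_s\, r\, e_t$, we get $r_{s,t} \in R$, $r = \sum_{s,t \in Q_0} r_{s,t}$, and each $r_{s,t}$ is a $k$-linear combination of paths from $s$ to $t$. Because the truncation map is $k$-linear and respects this decomposition, it suffices to show that $\bar{r_{s,t}} \in R$ for every pair $(s,t)$.

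The main step is a case split on $s$, using that $Q$ is ordered so that every vertex $v$ on a non-trivial path from $s$ to $t$ satisfies $s \leq v \leq t$. If $s \geq \ell$, then every such $v$ lies in $(Q_\ell)_0$, hence every arrow of every path summand of $r_{s,t}$ lies in $Q_\ell$, and $\bar{r_{s,t}} = r_{s,t} \in R$. If $s < \ell$, then every non-trivial path from $s$ begins with an arrow whose source $s$ lies outside $(Q_\ell)_0$, and is therefore not an arrow of the full subquiver $Q_\ell$; this first arrow is killed by truncation, so $\bar{r_{s,t}} = 0 \in R$. Summing over $(s,t)$ then yields $\bar r \in R$, as required.

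The only mild subtlety --- and the one place where the tensor-relation hypothesis of \rf{par:unital} plays a role --- is the second case, which requires that each summand of $r_{s,t}$ be a non-trivial path. This rules out the otherwise troublesome possibility that $r_{s,t}$ is a scalar multiple of the trivial idempotent $e_s$ with $s<\ell$, on which the truncation would act as the identity. I do not anticipate any serious obstacle beyond keeping track of this point.
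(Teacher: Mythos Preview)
Your proposal is correct and follows essentially the same strategy as the paper: reduce to relations homogeneous with respect to source and target, then case-split on whether the source vertex is at least $\ell$ (in which case the ordered structure forces the whole path into $Q_\ell$) or less than $\ell$ (in which case the first arrow of every path is killed). The only cosmetic difference is that the paper phrases the reduction in terms of generators of $R$ while you use the idempotent decomposition $r_{s,t}=e_s r e_t$; your version has the minor advantage of applying directly to every element of $R$ without a separate argument passing from generators to the ideal they generate. Your remark on the role of the tensor-relation hypothesis (forcing all paths to be non-trivial) matches the paper's implicit use of the same point.
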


\begin{proof}  With $\ell$ fixed, let $\bar R$ be as defined in \rf{par:subq}, then we need to show $\bar R\subseteq R\cap Q_\ell$.  For any relation $r\in R$ denote by $\bar r\in \bar R$ the relation obtained by setting every arrow not in $Q_\ell$ to be zero.

The ideal of relations $R$ is generated by relations in $R$ of the form $r=\sum p_j$ with each $p_j$ a path between two fixed vertices, say from $n$ to $m$; in particular every arrow occurring in $r$ is between vertices $v,w$ with   $n \leq v < w \leq m$.

If $n \geq \ell$ then $r=\bar r$ is contained in $Q_\ell$, since in this case $Q_\ell$ contains every arrow occurring in $r$.

If $n<\ell$ then at least one arrow in each path occurring in $r$ does not lie in $Q_\ell$, namely the arrow with source $n$.  In this case $\bar r=0\in R\cap Q_\ell$ as well.  \end{proof}

With the ordering of the vertices fixed, we have a filtration $...\subset K_{\ell+1}\subset K_\ell \subset K_{\ell-1}\subset \ldots U$ of the unit object $U\in Q$-Rep, so that the quotient $K_\ell/K_{\ell+1}$ is the simple object $U(\ell)$ supported at the vertex $\ell$.  

\begin{cor}\label{cor:filtration}  If $(Q,R)$ is a finite ordered quiver with tensor relations, then every $K_\ell$, $\ell=1, 2, \ldots, q$, lies in the subcategory $(Q,R)$-Rep.\end{cor}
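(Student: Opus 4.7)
The plan is to identify $K_\ell$ as the extension-by-zero of the unit object in $Q_\ell$-Rep, and then use the compatibility established in Lemma~\ref{lem:comp} to conclude.

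More precisely, let $f:Q_\ell\hookrightarrow Q$ denote the inclusion and let $U'$ denote the unit object in $Q_\ell$-Rep, that is, the representation with $k$ at each vertex of $Q_\ell$ and the identity map on each arrow. By inspection of the filtration $\cdots\subset K_{\ell+1}\subset K_\ell\subset\cdots\subset U$, the representation $K_\ell$ assigns $k$ to every vertex $v\geq\ell$, $0$ to every vertex $v<\ell$, the identity to every arrow between vertices $\geq\ell$, and the zero map to every arrow with source or target at a vertex $<\ell$. Thus $K_\ell=f_*(U')$.

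By Lemma~\ref{lem:comp}, the pair $(Q_\ell,R)$ is compatible, so $R':=R\cap Q_\ell=\bar R$ and by \rf{par:subq} the functor $f_*$ maps $(Q_\ell,R')$-Rep into $(Q,R)$-Rep. By \rf{par:tensorr}, $R'$ consists of tensor relations on $Q_\ell$, and in particular the unit object $U'$ of $Q_\ell$-Rep lies in $(Q_\ell,R')$-Rep. Therefore $K_\ell=f_*(U')\in (Q,R)$-Rep.

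There is really no hard step here; everything is already in place from \rf{par:subq}, \rf{par:tensorr}, and Lemma~\ref{lem:comp}. The only thing one has to notice is the concrete identification $K_\ell=f_*(U')$, which is immediate from the definition of the filtration and the fact that the unit object on any quiver has $k$ at each vertex with identity maps on all arrows.
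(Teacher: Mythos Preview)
Your argument is correct and is essentially identical to the paper's own proof: identify $K_\ell=f_*(U_\ell)$ for the inclusion $f:Q_\ell\hookrightarrow Q$, use Lemma~\ref{lem:comp} together with \rf{par:tensorr} to see that $U_\ell$ lies in $(Q_\ell,R_\ell)$-Rep, and then apply \rf{par:subq} to push forward into $(Q,R)$-Rep. Your explicit description of $K_\ell$ is a helpful elaboration, but otherwise the two proofs coincide.
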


\begin{proof}  Notice that if $f$ denotes the inclusion of $Q_\ell$ in $Q$, then $K_\ell$ is the image under $f_{*}$ of the unit object $U_\ell$ in $Q_\ell$-Rep.  Let $R_\ell:=R\cap Q_\ell$.  Then by \rf{lem:comp} and \rf{par:tensorr} we know that $(Q_\ell,R_\ell)$ is a finite ordered quiver with tensor relations; in particular $U_\ell\in (Q_\ell,R_\ell)$-Rep.  By \rf{lem:comp} and \rf{par:subq} we have $K_\ell=f_*U_\ell\in (Q,R)$-Rep.  \end{proof}

\mysec{Balmer's constructions and their applications to quivers}

In this section, we recall briefly the constructions in \cite[Definition~2.1 and Definition~6.1]{balmer_spectrumtt}, and apply them to quiver representations.  We see that a great deal of information is lost.

\mysubsec{Spectrum of a tensor triangulated category}

\p{} Let $T$ be a tensor triangulated category.  Here we recall Balmer's definition of the topological space $\Spc T$.  

\begin{defn}\label{def:spec}
 The space $\Spc T$ is the set of prime $\otimes$-ideals of $T$.  It is given the {\em Zariski topology}:  a closed set is of the form
\[ \mathbf{Z}(S) := \{ P \in \Spc T \vert S \cap P = \emptyset \},\]
where $S$ is a set of objects in $T$.
\end{defn}

\p{} If $X$ is a (topologically) noetherian scheme, by \cite[Corollary~5.6]{balmer_spectrumtt}, the topological space $\Spc(D(X)_\mathrm{parf})$ is homeomorphic to $X$.

 \p{} For any quiver $(Q,R)$ with tensor relations (see \rf{par:unital}) let $D(Q,R)$, and often just $D(Q)$, be the bounded derived category of $(Q,R)$-Rep, equipped with the vertex-wise tensor product.  
 
 We now calculate $\Spc D(Q)$ for a finite ordered quiver $Q$ with tensor relations, and show that the functor $Q \mapsto \Spc D(Q)$ loses a great deal of information; in fact, it retains merely the number of vertices of $Q$!

\p{}  Recall that every object $V$ in $D(Q)$ can be represented by a bounded complex of objects in $Q\mathrm{-Rep}$.  In particular its cohomology $H(V)=\bigoplus H^i(V)$ is again an object in $Q\mathrm{-Rep}$.  We call the graded vector space $H(V)_n$ \emph{the cohomology of $V$ at the vertex $n$}.

\begin{lemma}\label{lem:main}  Let $Q$ be a finite ordered quiver with tensor relations.  For any object $V$ in $D(Q)$, we have $\langle V\rangle=\langle U(n)\,|\, H(V)_n\neq 0\rangle$, where $U(n)$ is the simple representation corresponding to the vertex $n$.\end{lemma}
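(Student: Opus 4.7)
The plan is to prove the two containments separately, one by tensoring $V$ with each simple and one by building $V$ up from its cohomology.

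For the inclusion $\langle V\rangle \supseteq \langle U(n)\mid H(V)_n\neq 0\rangle$, I would show that for every vertex $n$ with $H(V)_n\neq 0$ the simple $U(n)$ lies in $\langle V\rangle$. The key computation is that the vertex-wise tensor product $V\otimes U(n)$ is concentrated at the single vertex $n$: at any vertex $m\neq n$ the $m$-th component is $V_m\otimes 0=0$, and since $Q$ is ordered, hence has no loops, every arrow acts as zero on $U(n)$ and so acts as zero on $V\otimes U(n)$. As a complex of representations, $V\otimes U(n)$ is therefore determined by the complex of vector spaces $V_n$ at vertex $n$, with total cohomology $H(V)_n\neq 0$. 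Hence in $D(Q,R)$ the object $V\otimes U(n)$ is quasi-isomorphic to a non-zero direct sum of shifts of $U(n)$, so $U(n)$ lies in the thick triangulated subcategory generated by $V\otimes U(n)$, which is contained in $\langle V\rangle$ because the latter is a tensor ideal.

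For the reverse inclusion $\langle V\rangle \subseteq \langle U(n)\mid H(V)_n\neq 0\rangle$, I would use that any object of a bounded derived category of an abelian category lies in the triangulated subcategory generated by the shifts of its cohomology objects, via the standard soft truncation triangles. It then suffices to show that each finite-dimensional representation $H^i(V)$ lies in the triangulated subcategory of $D(Q,R)$ generated by the set $\{U(m)\mid H^i(V)_m\neq 0\}$. This follows by taking a composition series of $H^i(V)$ in $Q\text{-Rep}$: the simple subquotients are precisely the $U(m)$ with $H^i(V)_m\neq 0$, and since $(Q,R)\text{-Rep}$ is closed under subobjects and quotients in $Q\text{-Rep}$ (a submodule or quotient of a module satisfying the relations still satisfies them), this composition series lives in $(Q,R)\text{-Rep}$ and yields a chain of triangles in $D(Q,R)$.

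The main obstacle to watch for is not the combinatorics but the need to stay inside $D(Q,R)$ rather than the larger $D(Q)$. This rests on two facts already recorded: the simples $U(n)$ lie in $(Q,R)\text{-Rep}$ (noted in \rf{par:qwr}, using that relations involve only non-trivial paths), and $(Q,R)\text{-Rep}$ is closed under subobjects and quotients. The remaining subtlety is the explicit description of $V\otimes U(n)$ as a complex concentrated at $n$, which uses that $Q$ is ordered and therefore has no loops; the rest is a routine manipulation of the standard truncation triangles.
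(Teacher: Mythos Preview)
Your proof is correct. The $\supseteq$ direction is essentially the paper's: both tensor $V$ with $U(n)$ and observe that the result is a complex concentrated at the single vertex $n$, hence a direct sum of shifts of $U(n)$. (You tensor with $U(n)$ directly, using only that $\langle V\rangle$ is a tensor ideal; the paper obtains the same object $V(n)$ by tensoring $V$ with the filtration of $U$ from \rf{par:filtration}.)

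For $\subseteq$ the arguments diverge. The paper reuses that same filtration: tensoring the chain $\ldots\subset K_{n+1}\subset K_n\subset\ldots\subset U$ with $V$ exhibits $V$ itself as an iterated extension of the $V(n)=V\otimes U(n)$, so $V\in\langle V(n)\mid n\rangle$, and since each $V(n)$ is already identified with a sum of shifts of $U(n)$ (or zero) the containment follows. You instead first pass to cohomology via soft truncation and then take a composition series of each $H^i(V)$. The paper's route is a bit more economical, since a single filtration of the unit handles both containments at once and never leaves the level of complexes; your route has the compensating virtue of making explicit why everything stays inside $(Q,R)\text{-Rep}$ (closure under subobjects and quotients), a point the paper leaves implicit.
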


\begin{proof}  The unit object is given by the representation $U$ with $U_n\cong k$ at each vertex $n$ and identity maps between them for all arrows.  Since $Q$ is finite and ordered, $U$ admits a filtration whose successive quotients are the simple representations $U(n)$; see \rf{par:filtration}. 

Tensoring this filtration with $V$ we see that $\langle V\rangle$ contains the object $V(n)=V\otimes U(n)$ with $V(n)_n=V_n$ and $V(n)_m=0$ for $m\neq n$; all the arrows in the representation $V(n)$ are the zero map.  On the other hand, $V$ is an extension of the $V(n)$'s; thus $V \in \ang{V(n)}$.  We have obtained that $\langle V\rangle=\langle V(n)\,|\, n=1, 2,\ldots\rangle$.

The object $V(n)\in \langle V\rangle$ is a complex of vector spaces, and the construction in \cite[Chapter~III, \S~1.4 Proposition]{gelfand_manin_homologicalalgebra} shows that it is isomorphic in $D(Q)$ to the complex $H(V)_n$ of vector spaces with zero differentials.  Since $\langle V\rangle$ is a thick triangulated subcategory,  it contains the simple representation $U(n)$ whenever $H(V)_n\neq 0$. 
  \end{proof}

\begin{cor}\label{cor:1}  If $V$ can be represented by a complex with non-zero cohomology at every vertex, then $\langle V\rangle=D(Q)$ is the unit ideal.\qed \end{cor}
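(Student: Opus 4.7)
The plan is to reduce Corollary \ref{cor:1} to Lemma \ref{lem:main} by exploiting the fact that the unit object itself has non-zero cohomology at every vertex. By the hypothesis $H(V)_n \neq 0$ for all vertices $n$ of $Q$, Lemma \ref{lem:main} directly yields
\[ \langle V\rangle = \langle U(n)\,|\, n=1,2,\ldots,q\rangle.\]
So the task is to show that the tensor ideal generated by all the simples $U(n)$ is the whole of $D(Q)$.

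The key observation is that the unit object $U\in D(Q)$ belongs to this ideal. Indeed, by \rf{par:filtration} and \rf{cor:filtration}, $U$ admits a finite filtration $K_{q+1}=0\subset K_q\subset \ldots\subset K_1=U$ in $(Q,R)\mbox{-Rep}$ whose successive quotients $K_\ell/K_{\ell+1}$ are exactly the simple objects $U(\ell)$. Since $\langle U(n)\,|\,n\rangle$ is a thick triangulated subcategory containing every $U(\ell)$, a straightforward induction up the filtration (using the distinguished triangles $K_{\ell+1}\to K_\ell\to U(\ell)\to K_{\ell+1}[1]$) shows that each $K_\ell$, and in particular $U=K_1$, lies in $\langle V\rangle$.

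Once $U\in \langle V\rangle$, we are done: for any object $W\in D(Q)$ we have $W\cong W\otimes U$, and the defining property of a tensor ideal implies $W\in \langle V\rangle$. Hence $\langle V\rangle = D(Q)$, the unit ideal. No step poses any real obstacle; the corollary is essentially a direct packaging of Lemma \ref{lem:main} together with the filtration of the unit supplied by \rf{cor:filtration}.
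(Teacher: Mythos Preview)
Your proof is correct and is essentially what the paper intends by its bare \qed: once Lemma~\ref{lem:main} gives $\langle V\rangle=\langle U(n)\,|\,n\in Q_0\rangle$, one only needs to know that the ideal generated by all simples is everything, and your filtration argument for $U$ establishes exactly that. An even quicker packaging is to apply Lemma~\ref{lem:main} to $U$ itself (since $H(U)_n=k\neq 0$ for every $n$) to get $\langle U(n)\,|\,n\rangle=\langle U\rangle=D(Q)$; this is the same filtration argument already absorbed into the proof of the lemma.
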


\p{H_n}   Consider the evaluation functor $V\mapsto V(n)=V\otimes U(n)$ from $Q\mathrm{-Rep}$ to the category $\mathrm{Vect}$.  This functor is exact and preserves the tensor products as well as the unit object.  Its derived functor from $D(Q)$ to $D^b(\mathrm{Vect})\cong \oplus \mathrm{Vect}[j]$ then sends $V$ to its cohomology $H(V)_n$ at $n$.  The kernel of this derived functor is a tensor ideal, which we denote by $P_n$.  It is the full subcategory of $D(Q)$ consisting of objects $V$ with $H(V)_n=0$.

\begin{theorem}\label{thm:main}  Let $Q$ be a finite ordered quiver with tensor relations.  Then $\Spc D(Q)$ is the discrete space $\{ P_n \vert \, n \in Q_0\}$.  \end{theorem}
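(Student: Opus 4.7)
The plan is to prove three things in order: (i) each $P_n$ is a prime tensor ideal; (ii) every prime tensor ideal of $D(Q)$ equals some $P_n$; and (iii) the resulting topology on $\{P_n\}$ is discrete. For (i), the ideal property is immediate since $P_n$ is by definition the kernel of an exact tensor functor $D(Q)\to D^b(\mathrm{Vect})$. For primality I would use that the vertex-wise tensor product at vertex $n$ is literally the tensor product of $k$-vector spaces, so the K\"unneth formula gives $H(V\otimes W)_n \cong H(V)_n\otimes_k H(W)_n$; working over a field, this vanishes iff one of the two factors does.

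For (ii), the crucial observation is that for distinct vertices $m\neq n$ the vertex-wise product $U(m)\otimes U(n)$ is the zero object, since the supports of $U(m)$ and $U(n)$ are disjoint. If $P$ is prime then $0\in P$ forces at most one of the simples $U(n)$ to lie outside $P$. Conversely, if every $U(n)$ lay in $P$, the filtration of $U$ by the $K_\ell$ (whose subquotients are the $U(\ell)$, and which lives in $(Q,R)$-Rep by Corollary \ref{cor:filtration}) would force $U\in P$ and hence $P = D(Q)$, contradicting properness. So exactly one vertex $n$ satisfies $U(n)\notin P$. To conclude $P = P_n$ I would apply Lemma \ref{lem:main} twice: for $V\in P$ we have $\ang{V}\subseteq P$, so if $H(V)_n\neq 0$ then $U(n)\in\ang{V}\subseteq P$, a contradiction; hence $V\in P_n$. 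Conversely, for $V\in P_n$, Lemma \ref{lem:main} presents $\ang{V}$ as generated by the $U(m)$ with $m\neq n$, all of which lie in $P$, so $V\in\ang{V}\subseteq P$.

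For (iii), it suffices to check that $\mathbf{Z}(\{U(n)\}) = \{P_n\}$, which follows from the identity $H(U(n))_m\neq 0 \iff m=n$; singletons being closed in a finite space yields the discrete topology. The main conceptual step is (ii), and within it the identity $U(m)\otimes U(n)=0$ for $m\neq n$, which pins a prime down to a single ``surviving'' vertex. Beyond that, everything reduces to applications of Lemma \ref{lem:main} together with the filtration of Corollary \ref{cor:filtration}; there are no delicate estimates nor nontrivial categorical facts beyond K\"unneth over a field.
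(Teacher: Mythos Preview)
Your argument is correct. The overall structure matches the paper's proof closely, especially the pivotal use of $U(m)\otimes U(n)=0$ for $m\neq n$ together with Lemma~\ref{lem:main}, but there is one genuine difference worth noting. For primality of $P_n$ you invoke K\"unneth over a field to see directly that $H(V\otimes W)_n=0$ forces $H(V)_n=0$ or $H(W)_n=0$. The paper instead first proves that each $P_n$ is a \emph{maximal} proper tensor ideal (via Corollary~\ref{cor:1}) and then appeals to \cite[Proposition~2.3(c)]{balmer_spectrumtt} to conclude primality; along the way it also shows that any proper ideal is contained in some $P_n$, so the $P_n$ are precisely the maximal ideals. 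Your route is more self-contained and avoids the external citation; the paper's route yields the slightly stronger maximality statement for free and then gets the containment $P\subseteq P_n$ without needing one of your two applications of Lemma~\ref{lem:main}. You also make the discreteness explicit via $\mathbf{Z}(\{U(n)\})=\{P_n\}$, which the paper leaves implicit.
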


\begin{proof}  Clearly $P_n \supseteq \langle U(m) \vert m \neq n \rangle$.  Suppose that $V \not \in P_n$.  
   Then $H(V)_n\neq 0$, hence $\displaystyle V\oplus\bigoplus_{m\neq n}U(m)$ is an object in $\langle P_n, V\rangle$ which has non-zero cohomology at every vertex.  By \rf{cor:1} we have $\langle P_n, V\rangle=D(Q)$.  Thus $P_n$ is maximal, and by  \cite[Proposition~2.3(c)]{balmer_spectrumtt} $P_n$ is prime.  

Let $I$ be an ideal that is not contained in any $P_n$.  Then for every $n$ we can find $V^n\in I$ with $H(V^n)_n\neq 0$.  This implies that $\oplus V^n\in I$ has non-zero cohomology at every vertex, and $I$ must then be the unit ideal.  Thus the $P_n$ are precisely the maximal ideals of $D(Q)$.

Let $P$ be a prime ideal in $D(Q)$.  By the previous paragraph it is contained in $P_n$ for some $n$, in particular $U(n)\notin P$.  But $U(m)\otimes U(n)=0\in P$ whenever $m\neq n$, hence $U(m)\in P$ for every $m\neq n$ since $P$ is prime.  That is, $P=P_n$.\end{proof}

\p{} Let $T$ be the triangulated category $D^b(\mathbb P^m)$ which by \rf{par:Pm} is equivalent to $D^b(S_m)$.  By \cite[Corollary~5.6]{balmer_spectrumtt} the spectrum $\mathrm{Spc}(T,\otimes_{\mathbb P^m})$ under the sheaf tensor product is homeomorphic to $\mathbb P^m$, while by \rf{thm:main} above $\mathrm{Spc}(T,\otimes_{S_m})$ is $m+1$ discrete points.  The same phenomenon happens for example for Grassmannians by \cite{kapranov_dercatcoh}, and more generally for varieties admitting a full exceptional set of objects. 

\mysubsec{The structure sheaf}

\p{support}  Let $T$ be a triangulated tensor category.  According to \cite[Definition~2.1]{balmer_spectrumtt} the space $\mathrm{Spc}(T)$ comes with a map $\mathrm{supp}(-)$ from the objects of $T$ to closed subsets of $\mathrm{Spc}(T)$ associating to every object $V$ in $T$ the set of prime ideals {\em not} containing $V$.   

By comparing this with \rf{thm:main} we see that for any object $V$ of $D(Q)$ we have \[\mathrm{supp}(V)=\{P_n\,|\,H(V)_n\neq 0\}.\]

\p{} Recall from \cite[Definition~6.1]{balmer_spectrumtt} the definition of the structure sheaf on $\mathrm{Spc}(T)$:  For any open subset $W\subseteq \mathrm{Spc}(T)$, denote by $Z$ its complement, and $T_Z$ the full triangulated subcategory of $T$ consisting of objects $a$ with $\mathrm{supp}(a)$ contained in $Z$.  Then the localization functor \cite{verdier_derivedcategories} $T\rightarrow T/T_Z$ is a tensor functor between tensor triangulated categories.  Denote still by $U$ the image of the unit object $U\in T$ in $T/T_Z$, then $\mathcal O_{\mathrm{Spec}(T)}$ is by definition the sheafification of the presheaf of rings
\begin{equation}\label{presheaf}
W\mapsto \mathrm{End}_{T/T_Z}(U).
\end{equation}

The {\em spectrum} of a tensor triangulated category $T$ is the pair $\Spec T := (\Spc T, \sO_{\Spec(T)})$.  

\p{} If $X$ is a topologically noetherian scheme, then $\Spec(D(X)_\mathrm{parf}) \cong X$ as locally ringed spaces, by \cite[Theorem~6.3]{balmer_spectrumtt}.   

\p{} We now consider the same construction for $T = D(Q)$.  

\begin{theorem}\label{thm:structure sheaf quiver}
 Let $Q$ be a finite ordered quiver with tensor relations.  Then $\sO_Q := \sO_{\Spec(D(Q))}$ is the constant sheaf of algebras $k$. That is, for any $W \subseteq \Spc(D(Q))= Q_0$, we have $\sO_{Q}(W) \cong k^{\oplus W}$.  
\end{theorem}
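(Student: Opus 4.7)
The plan is to reduce the theorem to a single computation using discreteness. By \rf{thm:main}, $\Spc D(Q) = Q_0$ is a finite discrete topological space; every subset is open, and $\{n\}$ is the smallest open neighborhood of each $n$. On such a space the stalk of any presheaf $F$ at $n$ is just $F(\{n\})$, and the sheafification takes the form $F^+(W) = \prod_{n \in W} F(\{n\})$. It therefore suffices to show that the presheaf (\ref{presheaf}) takes the value $k$ on each singleton $\{n\}$; the product formula then yields $\mathcal{O}_Q(W) \cong k^{\oplus W}$, noting that $k^{\oplus W} = k^W$ since $W$ is finite.

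Fix a vertex $n$ and set $Z := Q_0 \setminus \{n\}$. By the description of supports in \rf{support}, $T_Z$ coincides with $P_n$, both being the subcategory of objects with vanishing cohomology at $n$. To compute $\End_{D(Q)/P_n}(U)$ I first replace $U$ by a simpler object in the localization. By \rf{cor:filtration}, $U$ admits a filtration $0 = K_{q+1} \subset K_q \subset \cdots \subset K_1 = U$ in $(Q,R)$-Rep with successive quotients the simples $U(\ell)$. For $\ell \neq n$, $U(\ell) \in P_n$ and hence vanishes in $D(Q)/P_n$; a short induction on the triangles $K_{\ell+1} \to K_\ell \to U(\ell) \to K_{\ell+1}[1]$ gives $U \cong U(n)$ in $D(Q)/P_n$. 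The theorem reduces to the identity
\[
\End_{D(Q)/P_n}(U(n)) = k.
\]

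I plan to obtain this by upgrading the evaluation functor $\pi_n : V \mapsto V(n)$ of \rf{H_n} to an equivalence of tensor triangulated categories $\bar\pi_n : D(Q)/P_n \xrightarrow{\sim} D^b(\mathrm{Vect})$. Descent is automatic from $P_n = \ker \pi_n$. Essential surjectivity uses the extension-by-zero functor $\sigma_n : D^b(\mathrm{Vect}) \to D(Q)$ placing a complex at vertex $n$: this lands in $(Q,R)$-Rep because any single-vertex subquiver is compatible with any set of non-trivial relations (end of \rf{par:qex}), and $\pi_n \circ \sigma_n = \mathrm{Id}$. The key structural input is that the filtration argument of the previous paragraph, tensored with any $V \in D(Q)$, yields an isomorphism $V \cong V \otimes U(n) = \sigma_n \pi_n V$ in $D(Q)/P_n$, since in the triangles $V \otimes K_{\ell+1} \to V \otimes K_\ell \to V \otimes U(\ell) \to (V \otimes K_{\ell+1})[1]$ the third term lies in $P_n$ whenever $\ell \neq n$. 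Thus every object in $D(Q)/P_n$ is canonically isomorphic to one in the image of $\sigma_n$. The main obstacle is then to verify that the localization introduces no spurious morphisms between such objects: for $X, Y \in D^b(\mathrm{Vect})$, the canonical map
\[
\Hom_{D(Q)/P_n}(\sigma_n X, \sigma_n Y) \longrightarrow \Hom_{D^b(\mathrm{Vect})}(X, Y)
\]
induced by $\pi_n$ must be a bijection. Surjectivity is immediate from $\sigma_n$, while injectivity requires a calculus-of-fractions argument: any roof $\sigma_n X \xleftarrow{s} V \xrightarrow{f} \sigma_n Y$ with $\mathrm{cone}(s) \in P_n$ can be refined, using the isomorphism $V \cong \sigma_n \pi_n V$, to a roof with $V$ itself in the image of $\sigma_n$, where fully faithfulness of $\sigma_n$ on $D(Q)$ gives the result. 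Once the equivalence is established, $\End_{D(Q)/P_n}(U(n)) = \End_{D^b(\mathrm{Vect})}(k) = k$.
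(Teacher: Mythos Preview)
Your overall strategy coincides with the paper's: use discreteness to reduce to singletons, then identify the Verdier quotient $D(Q)/P_n$ with $D^b(\mathrm{Vect})$ so that $\End(U)=k$. The paper packages the second step as a more general proposition---for any full subquiver $f:Q'\hookrightarrow Q$ compatible with $R$, the restriction induces an equivalence $D(Q)/\ker(f^*)\simeq D(Q')$---and proves faithfulness by a cone-and-lifting trick: if $f^*g=0$ then $f^*h$ splits, a lift $\tilde m$ of the splitting makes $\tilde m\circ h$ an isomorphism in the quotient, whence $g=0$ there. Your route via the filtration by the $K_\ell$ is more hands-on and specialized to the single-vertex case, but is a legitimate alternative.

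There is, however, a real gap in your faithfulness step. You assert that a roof $\sigma_n X \xleftarrow{s} V \xrightarrow{f} \sigma_n Y$ can be ``refined, using the isomorphism $V\cong\sigma_n\pi_n V$, to a roof with $V$ in the image of $\sigma_n$.'' But that isomorphism exists only in $D(Q)/P_n$; in $D(Q)$ itself the filtration gives only a zigzag $V \xleftarrow{\alpha} V\otimes K_n \xrightarrow{\beta} \sigma_n\pi_n V$ with both cones in $P_n$. You can refine through $V\otimes K_n$ via $\alpha$, but $\beta$ points the wrong way for a further refinement (calculus of fractions requires a map \emph{into} the middle object), and in general there is no morphism in $D(Q)$ between $V$ and $\sigma_n\pi_n V$ with cone in $P_n$---for instance, there is typically no nonzero map $U\to U(n)$ or $U(n)\to U$ at all. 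So the reduction to roofs with middle object in $\mathrm{im}(\sigma_n)$ is not justified as written.

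The repair is immediate and already implicit in what you proved. The isomorphisms $V\cong V\otimes U(n)=\sigma_n\pi_n V$ in $D(Q)/P_n$ come from tensoring $V$ with fixed morphisms among the $K_\ell$ and $U(\ell)$, hence are \emph{natural} in $V$. They therefore assemble into a natural isomorphism $\sigma_n'\circ\bar\pi_n\cong\mathrm{Id}_{D(Q)/P_n}$ (where $\sigma_n'$ is $\sigma_n$ followed by localization), which together with $\bar\pi_n\circ\sigma_n'=\mathrm{Id}$ forces $\bar\pi_n$ to be an equivalence outright. No roof manipulation is needed. Alternatively, you could adopt the paper's cone-lifting argument, which avoids the filtration entirely.
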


\begin{proof}  Since $\mathrm{Spc}(D(Q))$ is a discrete topological space, it suffices to show that $\mathcal O_Q(\{v\})\cong k$ on the open set $\{v\}$ consisting of one point.  Let $Z:=\mathrm{Spc}(D(Q))-\{v\}$ be the complement of $\{v\}$ and $Q'$ the full subquiver with only one vertex $v$.  Denote by $f: Q'\rightarrow Q$ the inclusion.  By identifying $D(Q')$ with $D^b(\mathrm{Vect})$ we see that $f^*$ is exactly the functor $V\mapsto H(V)_v$ on $D(Q)$.  In particular we have $T_Z=\ker(f^*)$ and an induced functor \[\bar f^*: D(Q)/T_Z= D(Q)/\ker(f^*)\longrightarrow D(Q').\]

Since $Q'$ is compatible with tensor relations (see \rf{par:qex}), we may apply the next proposition to conclude the proof.  \end{proof}

\begin{prop}  Let $(Q,R)$ be a finite ordered quiver with relations.  Let $f:Q'\rightarrow Q$ be the inclusion of a full subquiver compatible with the relations; let $R':=R\cap Q'$ as in \rf{par:subq}.  Then the derived restriction functor $f^*:D(Q)\rightarrow D(Q')$ induces an equivalence \[\bar f^*: \overline T:=D(Q)/\ker(f^*)\longrightarrow D(Q').\]

\end{prop}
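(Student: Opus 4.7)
Plan: The strategy is to exhibit the extension-by-zero functor $f_*:D(Q')\to D(Q)$, composed with the quotient to $\overline T$, as a quasi-inverse to $\bar f^*$, and to identify each $V\in D(Q)$ with the image of its restriction via the unit filtration.

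Essential surjectivity of $\bar f^*$ is immediate. Both $f^*$ and $f_*$ are exact on the abelian categories of representations, so the identity $f^*f_*=\mathrm{Id}$ of \rf{par:subq} descends directly to derived categories; hence $\bar f^*([f_*V'])=V'$ for every $V'\in D(Q')$. Define the candidate quasi-inverse $g:D(Q')\to\overline T$ by $V'\mapsto[f_*V']$, which is tensor-compatible by \rf{par:tensorr}. Then $\bar f^*\circ g=\mathrm{Id}_{D(Q')}$ on the nose, so the remaining task is to produce a natural isomorphism $V\cong[f_*f^*V]$ in $\overline T$ for every $V\in D(Q)$.

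For this I would invoke the unit filtration of Corollary~\ref{cor:filtration}. Tensoring $V$ with the filtration $K_q\subset\cdots\subset K_1=U$ yields a filtration of $V$ whose successive quotients are $V(n):=V\otimes U(n)$. Each $V(n)$ is supported only at vertex $n$ with all structure maps zero, hence is canonically isomorphic to $f_*^{(n)}(V_n)$, the extension by zero of the vector space $V_n$ from the single-vertex subquiver $\{n\}$. Accordingly $V(n)\in\ker(f^*)$ precisely when $n\notin Q'_0$, so the corresponding pieces vanish in $\overline T$ and the filtration collapses to an iterated extension of the surviving pieces indexed by $n\in Q'_0$, which reassembles naturally into $[f_*f^*V]$.

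The main obstacle is making this reassembly rigorous by showing that the natural map
\[
\mathrm{Hom}_{D(Q)}(f_*V',f_*W')\longrightarrow\mathrm{Hom}_{\overline T}(f_*V',f_*W')
\]
is an isomorphism that coincides with $\mathrm{Hom}_{D(Q')}(V',W')$. The identification with $\mathrm{Hom}_{D(Q')}$ is a devissage argument through the simples, using that $Q$ is ordered and $R'$ inherits the tensor property from $R$ (\rf{par:tensorr}), so that injective or projective resolutions in $(Q',R')$-Rep extended by zero remain resolutions computing the same Ext groups in $(Q,R)$-Rep. The equality with $\mathrm{Hom}_{\overline T}$ requires showing that no new morphisms enter via Verdier roofs $f_*V'\leftarrow X\to f_*W'$ whose left leg has cone in $\ker(f^*)$; equivalently, any such roof admits a refinement making the map to $f_*W'$ factor through $f_*V'$. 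This is cleanest in the single-vertex case $Q'=\{v\}$ required for Theorem~\ref{thm:structure sheaf quiver}, where $\ker(f^*)=\langle U(w)\mid w\ne v\rangle$ is generated by simples orthogonal to $\mathrm{im}(f_*)$ in the relevant Ext-degrees thanks to the absence of oriented cycles; the general statement would reduce to this via an iterative argument through compatible single-vertex differences, justified by Lemma~\ref{lem:comp} and \rf{par:tensorr}.
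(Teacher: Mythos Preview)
Your approach differs from the paper's and contains a genuine gap. The paper does not attempt to build a quasi-inverse; instead it argues directly that $\bar f^*$ is essentially surjective and full (both immediate from $f^*f_*=\mathrm{Id}$, since a functor with a right inverse on objects and morphisms is automatically full and essentially surjective), and then proves faithfulness by a short cone argument: if a roof $X\stackrel{s}{\leftarrow}V\stackrel{g}{\rightarrow}Y$ has $f^*g=0$, then $f^*$ of the map $h:Y\to\mathrm{cone}(g)$ splits; lifting the splitting via fullness of $f^*$ produces $\tilde m$ with $f^*(\tilde m h)$ an isomorphism, so $\tilde m h$ is already an isomorphism in $\overline T$ and hence $g=0$ there.

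The gap in your argument is the decomposition of the desired isomorphism $\mathrm{Hom}_{D(Q')}(V',W')\cong \mathrm{Hom}_{\overline T}(f_*V',f_*W')$ into the two intermediate isomorphisms
\[
\mathrm{Hom}_{D(Q')}(V',W')\;\cong\;\mathrm{Hom}_{D(Q)}(f_*V',f_*W')\;\cong\;\mathrm{Hom}_{\overline T}(f_*V',f_*W').
\]
Both of these are false in general, even for tensor relations. Take $Q$ the commutative square of \rf{par:qex} with $R=(ab-cd)$ and $Q'=\{1,4\}$ (compatible, since every arrow lies outside $Q'$ and $\bar R=0=R\cap Q'$). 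A projective resolution of $U(1)$ in $(Q,R)\mbox{-Rep}$ is $0\to P_4\to P_2\oplus P_3\to P_1\to U(1)\to 0$, and applying $\mathrm{Hom}(-,U(4))$ gives $\mathrm{Ext}^2_{(Q,R)}(U(1),U(4))\cong k$, whereas in $D(Q')$ the two vertices are disconnected and all Ext groups vanish. Thus $f_*$ is \emph{not} fully faithful on derived categories, and the passage to $\overline T$ must kill this class (so the second map is not injective either). Your proposed justification---that extension by zero of a projective or injective resolution in $(Q',R')\mbox{-Rep}$ remains one in $(Q,R)\mbox{-Rep}$---fails for the same reason: $f_*U(1)=U(1)$ is projective in $Q'\mbox{-Rep}$ but has projective dimension $2$ in $(Q,R)\mbox{-Rep}$. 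The extra Ext classes arise precisely from paths through vertices outside $Q'$, and Verdier localization is what removes them; you cannot avoid engaging with the quotient. The paper's faithfulness argument does exactly this, and is both shorter and independent of any d\'evissage or single-vertex reduction.
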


\begin{proof}  We claim that $f^*$ is essentially surjective and full:  Indeed, the derived functor of the extension by zero exact functor \[f_*:Q'\mathrm{-Rep}\rightarrow Q\mathrm{-Rep}\] is a right inverse of $f^*$ on both objects and morphisms (see \rf{par:subq}).  It follows that $\bar f^*$ is also essentially surjective and full, and so it remains to show that it is faithful.  

Let $X, Y \in \Ob(D(Q)) = \Ob(\overline T)$.  An element of $\Hom_{\overline T}(X,Y)$ is represented by a diagram  $X\stackrel{s}{\leftarrow}V\stackrel{g}{\rightarrow}Y$ in $T$,
where $s$ is such that $f^*s$ is an isomorphism.  This is thought of as a ``morphism'' $g s^{-1}:  X \to Y$. 

Now let $X\stackrel{s}{\leftarrow}V\stackrel{g}{\rightarrow}Y$ represent a morphism in $\overline T$ that maps to zero under $\bar f^*$. 
In particular $g:V\rightarrow Y$ is a morphism in $D(Q)$ such that $f^*g=0$.  By applying $f^*$ to the distinguished triangle \[V\stackrel{g}{\longrightarrow} Y\stackrel{h}{\longrightarrow} \mathrm{cone}(g)\] we see that this implies $f^*h$ has a left inverse $m:f^*\mathrm{cone}(g)\rightarrow f^*Y$ in $D(Q')$.  

Let $\tilde m: \mathrm{cone}(g)\rightarrow Y$ be in $D(Q)$ such that $f^*\tilde m$ is equal to $m$; such an $\tilde m$ exists since $f^*$ is full, as observed above.  

Now $f^*(\tilde m\circ h)=m\circ f^*h$ is an isomorphism in $D(Q')$.  Therefore $\mathrm{cone}(\tilde m\circ h)$ lies in $\ker(f^*)$ and this means that $\tilde m \circ h$ also becomes an isomorphism in $\overline T$.  Hence $g$ maps to zero in $\overline T$ since $(\tilde m\circ h)\circ g$ maps to zero in $\overline T$.  So $X\stackrel{s}{\leftarrow}V\stackrel{g}{\rightarrow}Y$ represents the zero morphism in $\overline T$. \end{proof}

\p{} The presheaf \rf{presheaf} formally contains  more information than its sheafification $\sO_{\Spec(T)}$. In the case of a quiver, this difference is significant. Let $v, w$ be vertices in a quiver $Q$ without relations, and let $W := \{ v, w\}$ considered as an open subset of $\mathrm{Spec}(D(Q))$; denote by $Q_W$ the full subquiver of $Q$ consisting of vertices $v$ and $w$.  

Then $\End_{D(Q_W)}(U)$ is either $k$ or $k \oplus k$, depending on whether there are arrows between $v$ and $w$.  Thus the presheaf \rf{presheaf} recovers the underlying graph of the quiver, while the structure sheaf recovers only the number of vertices.  

\p{prime ideals}  \emph{Prime ideals in the path algebra:}  By \cite[page~53]{auslander_repartinalg} we know that the path algebra $kQ$ modulo its radical (which is the ideal generated by all the non-trivial paths) is isomorphic to the product of $\#Q_0$ copies of $k$.  In particular we see that the two-sided prime ideals in $kQ$ are naturally in bijection with prime ideals in $D(Q)$.  That is, the global sections of $\sO_{\mathrm{Spec}(D(Q))}$ are naturally isomorphic to $k Q/J(kQ)$, and, since the map $kQ \to kQ/J(kQ)$ has a right inverse, to a subalgebra of $kQ$.  

\p{par:examples}   Let $Q_i$, $i\geq 1$, be the $i$-Kronecker quiver:  the quiver (without relations) with two vertices and $i$ arrows all going from one vertex to the other.  There are obvious morphisms $Q_i\rightarrow Q_j$ with $i<j$, inducing functors $D(Q_j)\rightarrow D(Q_i)$ of tensor triangulated categories.  These in turn induce morphisms \[\mathrm{Spec}(D(Q_i))\longrightarrow \mathrm{Spec}(D(Q_j))\] of ringed spaces.  From \rf{thm:main} we see easily that these are isomorphisms, in particular we cannot recover quivers from their prime spectra.  Note that the presheaves \rf{presheaf} are isomorphic for all $Q_i$ as well.

\mysec{Functors of points}

\setcounter{subsection}{1}
\setcounter{subsubsection}{0}


\p{}  Let $T, S \in \mathbf{TT}$.    Define \[T(S):=(\mathrm{Hom}_{\mathbf {TT}}(T,S)/\cong\,)=(\mathrm{Fun}^{\otimes,1}(T,S)/\cong).\]
Elements in $T(S)$ will be called \emph{$S$-valued points in $T$}.  We then have a set-valued covariant functor $T(-)$ on $\mathbf{TT}$ for every $T\in\mathbf{TT}$.

\p{}  If $R$ is a commutative $k$-algebra, we define $T(R):= T(D^b(R-{\rm Mod}))$. Elements in $T(R)$ will be called \emph{$R$-rational points in $T$}. 

\p{} Let $Q$ be a finite ordered quiver with tensor relations \rf{par:unital}; we will compute $D(Q)(k)$.  We identify the tensor triangulated category $D^b(k) = D^b(\mathrm{Vect})$ with $\oplus \mathrm{Vect}[j]$ by taking cohomology of complexes of vector spaces.

 The functors $V\mapsto H(V)_n$ in \rf{H_n} are in $D(Q)(k)$, and conversely:

\begin{prop}\label{prop: points of quiver}  $D(Q)(k)=\{(V\mapsto H(V)_n)\,|\, n\in Q_0\}$.
\end{prop}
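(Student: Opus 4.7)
The plan is to use the compatibility of $F$ with the tensor structure to first pin down a unique ``active'' vertex $n_0 \in Q_0$ for which $F(U(n_0)) \cong k$ while $F(U(n)) = 0$ for all other $n$, and then identify $F$ with $H(-)_{n_0}$ via a single tensor product calculation.

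Fix $F \in D(Q)(k)$, realized by a tensor triangulated functor $F\colon D(Q) \to D^b(\mathrm{Vect})$ with $F(U) \cong k$, and set $V_n := F(U(n))$. Since the vertex-wise tensor product yields $U(n) \otimes U(m) = 0$ for $n \ne m$ and $U(n)^{\otimes 2} \cong U(n)$, the same relations hold among the $V_n$ in $D^b(\mathrm{Vect})$. Identifying $D^b(\mathrm{Vect})$ with $\bigoplus_j \mathrm{Vect}[j]$, the Poincar\'e polynomial $p_n(t) := \sum_j (\dim V_n^{-j})\, t^j \in \mathbb{Z}_{\ge 0}[t, t^{-1}]$ satisfies $p_n(t)^2 = p_n(t)$, and since $\mathbb{Z}[t, t^{-1}]$ is an integral domain we conclude $p_n \in \{0, 1\}$; that is, each $V_n$ is either zero or $k$ concentrated in degree $0$.

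I then apply $F$ to the filtration of $U$ described in \rf{par:filtration} (whose terms $K_\ell$ all lie in $(Q,R)$-Rep by \rf{cor:filtration}) and use additivity of the Euler characteristic along the resulting distinguished triangles in $D^b(\mathrm{Vect})$ to obtain
\[ 1 \;=\; \chi(F(U)) \;=\; \sum_{n \in Q_0} \chi(V_n). \]
Together with $\chi(V_n) \in \{0, 1\}$, this forces exactly one $n_0 \in Q_0$ to have $V_{n_0} \cong k$, with $V_n = 0$ otherwise; the orthogonality $V_{n_0} \otimes V_n = 0$ for $n \ne n_0$ is then automatic.

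To conclude, I invoke the isomorphism $V \otimes U(n_0) \cong H(V)_{n_0}$ in $D(Q)$ established inside the proof of \rf{lem:main}, where the right-hand side is viewed as a complex with zero differentials supported at vertex $n_0$, hence a finite direct sum of shifts of $U(n_0)$. Because $F$ is tensor triangulated and therefore commutes with finite direct sums and shifts, one obtains a natural isomorphism
\[ F(V) \;\cong\; F(V) \otimes F(U(n_0)) \;\cong\; F(V \otimes U(n_0)) \;\cong\; H(V)_{n_0}. \]
Thus $F$ is isomorphic as a tensor triangulated functor to the point $V \mapsto H(V)_{n_0}$; the reverse containment is immediate from \rf{H_n}, and distinct choices of $n$ give non-isomorphic points (evaluate on $U(n)$). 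The main obstacle is the idempotent step in the first paragraph, which is what forces the single-vertex behaviour of $F$; everything thereafter is bookkeeping with the tensor triangulated structure and the filtration of $U$.
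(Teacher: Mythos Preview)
Your argument is correct and takes a different route from the paper's. The paper locates the distinguished vertex by applying $F$ to the triangles $K_{n+1}\to U\to U/K_{n+1}$, tracking for which $n$ the map $F(f_{n+1})$ first vanishes, and then running a diagram chase with the $3\times 3$ grid of triangles to exhibit $k$ as a summand of $F(U(n))$; only afterwards does idempotence of $U(n)$ force $F(U(n))\cong k$. You instead use idempotence \emph{first}, via the Poincar\'e-polynomial identity $p_n^2=p_n$ in the integral domain $\mathbb Z[t,t^{-1}]$, to pin each $F(U(n))$ down to $0$ or $k$, and then invoke the filtration only through additivity of Euler characteristics to see that exactly one is $k$. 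Your argument is shorter and avoids the diagram chase entirely; the paper's is more hands-on with the filtration and would adapt more readily to settings where one does not have such clean orthogonal idempotents.

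One small point about the last step: your displayed chain yields an isomorphism $F(V)\cong H(V)_{n_0}$ for each $V$, but the word ``natural'' is not quite earned by the justification you give, since the splitting $V\otimes U(n_0)\cong\bigoplus_j U(n_0)^{\oplus d_j}[-j]$ is not natural in $V$. The paper closes this by observing that both $F$ and $F_{n_0}$ factor (naturally) through $-\otimes U(n_0)$ into the full subcategory of objects supported at $n_0$, which is tensor-equivalent to $D^b(k)$, and that any unit-preserving exact tensor endofunctor of $D^b(k)$ is (naturally isomorphic to) the identity. Plugging that same remark into your argument makes the naturality explicit.
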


\begin{proof}  Let $F\in D(Q)(k)$.  Consider the filtration $\ldots \subset K_{n+1}\subset K_{n}\subset \ldots\subset U$ of the unit object in $D(Q)$ such that the quotient $K_n/K_{n+1}$ is the simple object $U(n)$ supported at the vertex $n$.  By  \rf{cor:filtration}, each $K_n$  lies in $D(Q)=D(Q,R)$. 

Since $F$ preserves the unit objects, $F(U)$ is isomorphic to $k$ viewed as a graded vector space placed at degree $0$.  Hence on applying $F$ to the distinguished triangle \[K_{n+1}\stackrel{f_{n+1}}{\longrightarrow} U\stackrel{g_{n+1}}{\longrightarrow} U/K_{n+1},\] we see that for any fixed vertex $n$ exactly one of $F(f_{n+1})$ and $F(g_{n+1})$ is equal to $0$, while the other one admits a one-sided inverse. 

 Note that for $n$ sufficiently large we have $F(f_{n+1})=0$.  
Moreover, if $F(f_{n+1})=0$ then $F(f_{n+2})=0$:  Indeed, suppose on the contrary that $F(f_{n+1})=0$ but $F(f_{n+2})\neq 0$, then we get a contradiction by applying $F$ to the following commutative diagram:

\centerline{\xymatrix{K_{n+2} \ar[r]^-{f_{n+2}} \ar[d] & U \ar@{=}[d]\\ K_{n+1}\ar[r]^-{f_{n+1}}& U.}}

So from now on let $n$ be minimal so that $F(f_{n+1})=0$.  Consider the following diagram of distinguished triangles, where by assumption $F(f_n)$ is non-zero and admits a right inverse $m$:

\[
\xymatrix{K_{n+1} \ar[r]^-{f_{n+1}}\ar[d] & U \ar[r]\ar@{=}[d] & U/K_{n+1}\ar[d] &&& F(K_{n+1}) \ar[r]^-{0 }\ar[d] & F(U) \ar[r]\ar@{=}[d] & F( U/K_{n+1})\ar[d]
 \\ K_n \ar[r]^-{f_n}\ar[d]_-q & U \ar[r]\ar[d] & U/K_n \ar[d] & \ar @/^/ @{..>} [r]^{F} && F(K_n) \ar[r]^{F(f_n)}\ar[d]_{F(q)} & F(U) \ar@/^1pc/[l]^{m} \ar[r]\ar[d] & F(U/K_n) \ar[d] \\ 
U(n) \ar[r] & 0 \ar[r] & U(n)[1] &&& F(U(n)) \ar[r] & 0 \ar[r] & F(U(n))[1].}\]

  The map $F(q) \circ m: F(U)\rightarrow F(U(n))$ must  be non-zero, since otherwise $m$ lifts to a non-zero map from $F(U)$ to $F(K_{n+1})$, giving a right inverse to $F(f_{n+1})$, which is zero by assumption.

This says that $F(U(n))$ contains $F(U)=k$ as a direct summand, and when viewed as an object in $\oplus \mathrm{Vect}[j]$, $F(U(n))$ must be isomorphic to $k$ since $U(n)\otimes U(n)=U(n)$.

The two functors $V\mapsto F(V)=F(U\otimes V)$ and $V\mapsto F(U(n)\otimes V)=F(V_n)$ are then isomorphic:   
\[ F(V) = F(U \otimes V) = k \otimes F(V) = F(U(n))\otimes F(V) = F(U(n)\otimes V) = F(V_n).\]

Now the full subcategory of $D(Q)$ consisting of objects of the form $U(n)\otimes V$ is isomorphic to $D^b(k)$, and any exact functor preserving tensor products and the unit objects from $D^b(k)$ to itself is the identity.  Hence on identifying $D^b(k)$ with $\oplus \mathrm{Vect}[j]$ we see that the functor $F$ is isomorphic to $V\mapsto H(V)_n$.  \end{proof}

\p{par:sum}  To summarize, we have established bijections for any finite ordered quiver $Q$ with tensor relations:

\hskip 1.5in\xymatrix{Q_0\ar[r] & D(Q)(k)\ar[r] & \mathrm{Spc}(D(Q)) \\ n \ar@{|->}[r] & (V\mapsto H(V)_n) \\ & F \ar@{|->}[r] & \ker F.}

\p{par:F_n} Let $F_n \in D(Q)(k)$ be the functor $V\mapsto H(V)_n$ corresponding to the vertex $n$.   We now give a useful formula for the functors $F_n$.  Denote by $M_n$ the indecomposable projective module of the path algebra $kQ$ of $Q$ with simple quotient $U(n)$, namely, it is the \emph{right} submodule of $kQ$ generated by the trivial path $e_n$ at the vertex $n$:  $M_n=e_n(kQ)$ is the $k$-vector space spanned by all paths starting at the vertex $n$; see \cite[Page~59]{auslander_repartinalg}.  Then \[kQ\cong \bigoplus_n M_n.\]  Under the usual identification of $Q\mathrm{-Rep}$ with $\mathrm{mod-}kQ$, we may view $M_n$ as an object in the former abelian category.

\begin{lemma}\label{lem:F_n}  We have $F_n(-)=R\mathrm{Hom}(M_n,-)$ on $D(Q)$.  

\end{lemma}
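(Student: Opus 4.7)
The plan is to reduce to the standard identification $\mathrm{Hom}_A(eA, V)\cong V\cdot e$ for an idempotent $e$ in a ring $A$, and then observe that since $M_n$ is projective, the derived functor $R\mathrm{Hom}(M_n,-)$ is computed without any resolution.

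First I would note that for any right $kQ$-module $V$, evaluation at the trivial path $e_n$ defines a natural isomorphism
\[
\mathrm{Hom}_{kQ}(M_n,V)\;=\;\mathrm{Hom}_{kQ}\bigl(e_n(kQ),V\bigr)\;\xrightarrow{\;\sim\;}\; V\cdot e_n\;=\;V_n,
\]
which in particular applies to any $V\in(Q,R)\mathrm{-Rep}$ viewed as a $kQ$-module by restriction along $kQ\twoheadrightarrow kQ/R$. Since $M_n$ is a direct summand of $kQ$ via the decomposition $kQ=\bigoplus_m M_m$ given in the statement, it is projective in $\mathrm{mod-}kQ$; equivalently, the functor $V\mapsto V_n$ on $Q\mathrm{-Rep}$ is exact, which is also immediate from the componentwise definition of quiver representations.

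Next, for any $V\in D(Q)=D(Q,R)$, I would choose a bounded complex $V^\bullet$ in $(Q,R)\mathrm{-Rep}$ representing $V$; by exactness the derived functor agrees with the term-wise application, giving
\[
R\mathrm{Hom}(M_n,V)\;\cong\;\mathrm{Hom}_{kQ}(M_n,V^\bullet)\;=\;V^\bullet_n
\]
in $D^b(\mathrm{Vect})$, whose cohomology is $H(V^\bullet)_n=H(V)_n=F_n(V)$. The one point to track is that $M_n$ need not itself satisfy the relations $R$, so $R\mathrm{Hom}(M_n,-)$ should be interpreted in the ambient derived category $D^b(\mathrm{mod-}kQ)$ via the embedding $D(Q,R)\hookrightarrow D^b(\mathrm{mod-}kQ)$ coming from \rf{par:qwr}; no projective resolution of $V$ is needed, since $M_n$ is already projective. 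I do not foresee any substantive obstacle in carrying the argument out.
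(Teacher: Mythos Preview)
Your argument is correct and is precisely the standard one the paper is invoking: the paper's proof is a one-line citation to an exercise in Auslander--Reiten--Smal\o, and the content of that exercise is exactly the identification $\mathrm{Hom}_{kQ}(e_n\,kQ,V)\cong V e_n=V_n$ via evaluation at $e_n$, together with the observation that both sides are exact in $V$ so the derived versions agree termwise.  Your remark that $M_n$ may fail the relations $R$ and that one should therefore compute $\mathrm{Hom}$ over $kQ$ (or equivalently replace $M_n$ by $e_n\Lambda$ with $\Lambda=kQ/(R)$) is well taken; the paper itself silently switches between $M_n=e_n(kQ)$ here and $M_n=e_n\Lambda$ in the proof of Theorem~\ref{thm:recovery}, and either reading gives the same answer since $\mathrm{Hom}_{kQ}(e_n\,kQ,V)=\mathrm{Hom}_\Lambda(e_n\Lambda,V)$ for any $\Lambda$-module $V$.
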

\begin{proof}  
This is well-known, and follows for example from \cite[Exercise~III.10]{auslander_repartinalg}. \end{proof}

\mysec{Algebras associated to a tensor triangulated category}

\mysubsec{Defining the algebra}

\p{}  Let $S, T\in\mathbf{TT}$. For any $F,G\in T(S)$ denote by $\mathrm{Hom}(F,G)$ the set of natural transformations.  Since $T$ is a $k$-linear category, this is a $k$-vector space.
 Let $U$ be the unit object of $T$.   

On the $k$-vector space \[A(T,S):=\prod_{F,G\in T(S)}\mathrm{Hom}(F,G)\] we then have a partially defined product by composition, which gives an associative, in general non-commutative algebra by defining the product between elements which are not composable to be zero.  We are most interested in the special case:
\[ A(T) := A(T, D^b(k)) := \prod_{F,G\in T(k)}\mathrm{Hom}(F,G).\]

\p{} The functor  $A(\blank, \blank)$ is contravariant in its first argument and covariant in its second.

\mysubsec{Recovering finite ordered quivers}

\p{}  We now show that a finite ordered quiver with tensor relations \rf{par:unital} can be essentially recovered from its tensor triangulated category of representations.  Recall that for a quiver $Q$ we denote by $D(Q)$ the bounded derived tensor category of finite dimensional representations of $Q$ over the fixed field $k$.

\begin{thm}\label{thm:recovery}  Let $(Q, R)$ be a finite ordered quiver with tensor relations.  The  algebra $k Q/(R)$ of $(Q, R)$ over $k$ is naturally isomorphic to $A(D(Q))$.
\end{thm}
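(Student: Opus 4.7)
The plan is to use \rf{prop: points of quiver} and \rf{lem:F_n} to compute $A(D(Q))$ via a Yoneda-style identification, and then to match the multiplication with that of $kQ/(R)$.

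By \rf{prop: points of quiver}, $D(Q)(k)=\{F_n\,|\,n\in Q_0\}$, so
\[A(D(Q))\;=\;\prod_{n,m\in Q_0}\mathrm{Hom}(F_n,F_m)\;=\;\bigoplus_{n,m\in Q_0}\mathrm{Hom}(F_n,F_m)\]
(the second equality because $Q_0$ is finite).  By \rf{lem:F_n}, $F_n(-)=R\mathrm{Hom}(M_n,-)$; for our purposes it is convenient to take $M_n=e_n(kQ/(R))$ as an object of $(Q,R)\mbox{-Rep}$, which is the indecomposable projective with simple quotient $U(n)$.  A Yoneda-style argument --- evaluating a natural transformation $\phi:F_n\rightarrow F_m$ at $M_n$ and tracing the element $\mathrm{id}_{M_n}\in F_n(M_n)$, using that $F_m(M_n)=R\mathrm{Hom}(M_m,M_n)$ is concentrated in degree $0$ by the projectivity of $M_m$ --- produces a natural bijection
\[\mathrm{Hom}(F_n,F_m)\;\cong\;\mathrm{Hom}_{(Q,R)\mbox{-Rep}}(M_m,M_n)\;=\;e_n\,(kQ/(R))\,e_m,\]
i.e.\ the vector space of classes of paths from $n$ to $m$.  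Summing over all $(n,m)$ and invoking the Peirce decomposition $1=\sum_n e_n$ in $kQ/(R)$ yields a vector space isomorphism
\[\Phi:\;kQ/(R)\;\stackrel{\cong}{\longrightarrow}\;A(D(Q)),\]
sending a path $p\in e_n(kQ/(R))e_m$ to the natural transformation $\phi_p:F_n\rightarrow F_m$ corresponding under Yoneda to the right-$(kQ/(R))$-module map $M_m\rightarrow M_n,\ e_m\mapsto p$; equivalently, $\phi_p$ is postcomposition with this map on $R\mathrm{Hom}(M_-,-)$.

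The remaining step, and the main place where I expect to spend real care, is to verify that $\Phi$ is an algebra map rather than merely a linear isomorphism.  For composable paths $p:n\to m$ and $q:m\to\ell$, a short calculation using right-$(kQ/(R))$-linearity shows that the composite natural transformation $\phi_q\circ\phi_p$ corresponds under Yoneda to the right module map $e_\ell\mapsto pq$, that is, to $\phi_{pq}$.  For non-composable pairs the product in $A(D(Q))$ vanishes by definition, while $pq=0$ in $kQ/(R)$ by the orthogonality of the $e_n$.  The delicate point is simply keeping the conventions straight: the right-module structure, the rule $e_mp=p$ for paths beginning at $m$, and the contravariance of Yoneda combine so that composition of natural transformations translates to concatenation of paths in the \emph{same} order, producing an honest algebra isomorphism and not its opposite.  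Naturality of $\Phi$ in $(Q,R)$ then follows from the functoriality of every construction used.
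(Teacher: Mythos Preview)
Your proposal is correct and follows essentially the same route as the paper: identify $D(Q)(k)$ with $Q_0$ via \rf{prop: points of quiver}, represent each $F_n$ by the projective $M_n$ via \rf{lem:F_n}, apply Yoneda to get $\mathrm{Hom}(F_n,F_m)\cong\mathrm{Hom}_\Lambda(M_m,M_n)\cong e_n\Lambda e_m$, and assemble via the Peirce decomposition. The paper organizes this slightly differently---it first writes down the map $p\mapsto(\phi(p)_V:=H(V)(p))$, so that multiplicativity is transparent, and then invokes Yoneda to prove bijectivity---but the content is the same (one small slip: your $\phi_p$ is \emph{pre}composition with $M_m\to M_n$, not postcomposition).
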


\begin{proof}  Recall that we have a bijection (see \rf{par:sum}) from the set of vertices $Q_0$ to $T(k)$ given by $n\mapsto (F_n:V\mapsto H(V)_n)$.  

Let $\Lambda:= kQ/(R)$.  
The set of paths from vertex $n$ to $m$ is $e_n\Lambda e_m$ (where $e_n$ is the trivial path at $n$), and we first define a map \[\phi:e_n\Lambda e_m \longrightarrow \mathrm{Hom}(F_n,F_m)\] for every pair of vertices $n,m$.  Let $p\in e_n\Lambda e_m$ be a path from $n$ to $m$, and $V\in D(Q)$, then we define \[\phi(p)_V: F_n(V)\rightarrow F_m(V)\] to be the morphism $H(V)(p): H(V)_n\rightarrow H(V)_m$ (where the cohomology $H(V)$ is a complex of $Q$-representations).  This is well-defined since $H(V)(r)=0$ for every relation $r$.  Note that this is defined without assumptions on  $Q$ or $R$.

Now by the Yoneda lemma on the category $D(Q)$, remembering \rf{lem:F_n} that $F_n$ is represented by $M_n=e_n\Lambda $, we have \[\mathrm{Hom}(F_n,F_m)\cong F_m(M_n)\cong \mathrm{Hom}_{\Lambda }(M_m, M_n),\]sending $\rho: F_n\rightarrow F_m$ to $\rho_{M_n}(e_n)$, where $\rho_{M_n}:(F_n(M_n)=ke_n)\rightarrow F_m(M_n)$.

On the other hand we have a natural isomorphism \[\psi:\mathrm{Hom}_\Lambda (M_m,M_n)\longrightarrow e_n\Lambda e_m,\] sending $f: M_m\rightarrow M_n$ to $f(e_m)\in M_n$; see  \rf{lem:F_n}.  It is now easy to show that $\psi$ is the inverse of $\phi$: 

\centerline{\xymatrix{e_n\Lambda e_m \ar[r]^-\phi & \mathrm{Hom}(F_n, F_m) \ar[r] & F_m(M_n) \ar[r] & \mathrm{Hom}(M_m, M_n) \ar[r]^-{\psi} & e_n\Lambda e_m   \\ p \ar@{|->}[r] & (V\mapsto \phi(p)_V) \ar@{|->}[r]& \phi(p)_{M_n}(\mathrm{Id}_{M_n} ) \ar@{|->}[r] & (e_mq\mapsto pe_mq) \ar@{|->}[r] & pe_m=p.}     }

The naturality in the statement means the following.  Let $(Q, R)$ and $(Q', R')$ be two quivers with tensor relations; let $\Lambda:= kQ/(R)$ and let $\Lambda':= kQ'/(R')$.  Suppose there is a $k$-algebra homomorphism $\Lambda \to \Lambda'$.  We obtain an induced restriction functor $D(Q', R') \to D(Q, R)$ and a homomorphism $A(D(Q, R)) \to A(D(Q', R'))$.  

Then these homomorphisms form a commutative square with the isomorphisms $\Lambda \cong A(D(Q))$ and $\Lambda'\cong A(D(Q'))$ above.  We leave the details to the reader. \end{proof} 

Thus a finite, ordered quiver $Q$ with tensor relations (or at least its path algebra) can be recovered from its tensor triangulated category $D(Q)$ of representations.  By \cite[Chapter~III, Theorem~1.9(c)(d)]{auslander_repartinalg}, the quiver $Q$ can be recovered from its path algebra if the ideal generated by its relations $R$ lies between $J^2$ and $J^t$ for some integer $t$, where $J$ is the radical of $kQ$.

\mysubsec{Comparing the two constructions}

\p{}  For  a tensor triangulated category $T$, the structure sheaf $\sO_{\Spec T}$ and the algebra $A(T)$ are naturally related.  This follows from:
\begin{proposition}\label{prop:algebra}
 Let $T, S$ be  tensor triangulated categories, and let $U$ be the  unit object of $T$.  Then $A(T, S)$ is naturally an $\End_T(U)$-algebra. 
\end{proposition}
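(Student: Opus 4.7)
The plan is to exhibit a ring homomorphism $\Phi\colon \End_T(U)\to A(T,S)$ with central image; this, by definition, makes $A(T,S)$ into an $\End_T(U)$-algebra.  The idea is that a morphism $\alpha\colon U\to U$ in $T$ gives rise, via $V\mapsto \alpha\otimes\id_V$ (composed with the unit constraint $U\otimes V\xrightarrow{\sim} V$), to a natural endomorphism $\tilde\alpha\in\End(\Id_T)$ of the identity functor on $T$.  Since every $F\in T(S)$ preserves tensor products and the unit object, whiskering produces a natural endomorphism $F\tilde\alpha\colon F\Rightarrow F$ whose $V$-component is $F(\alpha\otimes\id_V)$.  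I then set $\Phi(\alpha)$ to be the element of $A(T,S)$ that is $F\tilde\alpha$ in each diagonal slot $(F,F)$ and zero off the diagonal.

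It then remains to verify three points.  First, that $\Phi$ is a ring homomorphism:  unit preservation follows from $F(\id_U\otimes\id_V)=\id_{F(V)}$, and multiplicativity from the interchange identity $(\alpha\circ\beta)\otimes\id_V=(\alpha\otimes\id_V)\circ(\beta\otimes\id_V)$ together with the functoriality of each $F$.  Second, that the image of $\Phi$ is central in $A(T,S)$:  because $\Phi(\alpha)$ is supported on the diagonal, the ``matrix'' products $\Phi(\alpha)\cdot\mu$ and $\mu\cdot\Phi(\alpha)$, for $\mu\in\Hom(F,G)$, each reduce to a single composition, and the identity $G(\alpha\otimes\id_V)\circ\mu_V=\mu_V\circ F(\alpha\otimes\id_V)$ needed for their equality is precisely the naturality square of $\mu\colon F\Rightarrow G$ evaluated at the endomorphism $\alpha\otimes\id_V$ of $V$.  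Third, the naturality of this structure in $T$ and $S$ is automatic, since the whole construction is built purely from the tensor-triangulated data of $T$ and $S$ and the tensor-preserving, unit-preserving functors between them.

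The main point needing any care is purely notational: keeping straight that $A(T,S)$ is a ``matrix algebra'' indexed by pairs of functors, with composition across a shared middle index and zero on incompatible pairs, so that $\Phi(\alpha)$ being supported on the diagonal is what makes the centrality check collapse to the naturality of $\mu$.  There is no genuine mathematical obstacle beyond recognising that $\alpha\otimes\id_V$ assembles into a natural transformation of $\Id_T$ and that this assignment $\End_T(U)\to\End(\Id_T)$ is a ring map; both are standard, but worth stating explicitly since they are the pivot of the argument.
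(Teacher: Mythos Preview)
Your proposal is correct and follows essentially the same approach as the paper: map $\alpha\in\End_T(U)$ to the natural endomorphism of $\Id_T$ given by tensoring with the identity (via the unit constraint), whisker with each $F\in T(S)$ to obtain the diagonal element of $A(T,S)$, and deduce centrality from the naturality square of any $\mu\colon F\Rightarrow G$. The only cosmetic difference is that the paper writes the component as $M\cong M\otimes U\xrightarrow{1\otimes\phi}M\otimes U\cong M$ rather than $\alpha\otimes\id_V$, and leaves the ring-homomorphism and ``matrix'' bookkeeping implicit where you spell it out.
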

\begin{proof}
 Let $\phi \in \End_T(U)$.  Then $\phi$ induces a natural transformation $\Phi:  \Id_T \to \Id_T$, where $\Phi_M:  M \to M$ is given by
\[ \xymatrix{ M \ar[r]^{\cong} & M \otimes U \ar[r]^{1 \otimes \phi} & M \otimes U \ar[r]^{\cong}&  M.}\]

For every $F \in T(k)$, this then induces a natural transformation $F(\Phi):  F\to F$, defined by
\[ F(\Phi)_M = F(\Phi_M):  FM \to FM.\]
The map
\begin{align*}
 z:  \End_T(U) & \to A(T, S) \\
 \phi & \mapsto (F(\Phi))_F
\end{align*}
 is easily seen to be a ring homomorphism.

Let $F, G \in T(S)$ and let $\beta \in \Hom(F, G)$.  Let $M \in T$.  Then by naturality of $\beta$, the diagram
\[ \xymatrix{
 FM \ar[r]^{F(\Phi_M)} \ar[d]_{\beta_M} & FM \ar[d]^{\beta_M} \\
GM \ar[r]_{G(\Phi_M)} & GM }
\]
commutes.  That is, $G(\Phi) \circ \beta = \beta \circ F(\Phi)$ in $A(T, S)$, and $z(\phi) \in Z(A(T))$.
\end{proof}

\p{} From the previous result, we see that for any tensor triangulated category $T$, there are ring homomorphisms
\[ \xymatrix{
 \End_T(U) \ar[r]^{\alpha} \ar[rd]_{z} & \Gamma(\sO_{\Spec T}) \\
& A(T), }
\]
where $\alpha$ is induced from the canonical map from a presheaf to the associated sheaf.  
In general, there does not seem to be any reason why there should be a vertical map (in either direction) completing the triangle.  However, we have:

\begin{proposition}\label{prop:quiver maps}  Let  $Q$ be a finite ordered quiver with tensor relations and $T = D(Q)$.  Then there are vertical maps so that the diagram
\[ \xymatrix{
 \End_T(U) \ar[r]^{\alpha} \ar[rd]_{z} & \Gamma(\sO_{\Spec T}) \ar@<1ex>[d] \\
& A(T) \ar@<1ex>[u],}
\]
commutes; further $\End_T(U) \cong Z(A(T))$.
\end{proposition}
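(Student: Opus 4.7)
The plan is to make every piece of the diagram explicit, and then check commutativity by direct computation in the path algebra $\Lambda := kQ/(R)$.  By \rf{thm:structure sheaf quiver} we have $\Gamma(\sO_{\Spec T}) \cong k^{Q_0}$, and by \rf{thm:recovery} we have $A(T) \cong \Lambda$.  The unit object $U$ assigns $k$ to each vertex and the identity to each arrow, so
\[
\End_T(U) = \End_{Q\text{-Rep}}(U) = \bigl\{(\lambda_n) \in k^{Q_0} : \lambda_n = \lambda_m \text{ whenever } Q \text{ has an arrow } n \to m\bigr\},
\]
which sits inside $\Lambda$ via $(\lambda_n) \mapsto \sum_n \lambda_n e_n$.

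For the vertical maps I would take the downward arrow $\Gamma(\sO_{\Spec T}) \to A(T)$ to be $(c_n) \mapsto \sum_n c_n e_n$, and the upward arrow $A(T) \to \Gamma(\sO_{\Spec T})$ to be the quotient $\Lambda \twoheadrightarrow \Lambda/J(\Lambda) \cong k^{Q_0}$ by the radical of non-trivial paths (cf.~\rf{prime ideals}); the upward map retracts the downward one since the $e_n$ are orthogonal idempotents summing to $1$.  Unwinding the definition of $z$ in \rf{prop:algebra} together with the equivalence $T/T_{\{n\}^c} \cong D^b(\mathrm{Vect})$ established in the proof of \rf{thm:structure sheaf quiver}, one checks that $\alpha(\phi) = \bigl(F_n(\phi)\bigr)_n$ in $k^{Q_0}$ and $z(\phi) = \sum_n F_n(\phi)\, e_n$ in $\Lambda$, where $F_n(\phi) \in k = \End(F_n(U))$ is the scalar by which $\phi$ acts on the vertex-$n$ component of $U$.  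Commutativity of both triangles follows at once from these formulas.

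For the isomorphism $\End_T(U) \cong Z(A(T))$, \rf{prop:algebra} already gives that $z$ lands in $Z(A(T)) \cong Z(\Lambda)$, and $z$ is visibly injective.  A short calculation in $\Lambda$ shows that any central element is a linear combination of the trivial paths $e_n$ (since $Q$ is ordered, no non-trivial path $p$ satisfies $e_n p = p e_n$ for every $n$), and commutation with each arrow $a: n \to m$ forces the $e_n$-coefficient to equal the $e_m$-coefficient; the tensor relations $R$ only involve non-trivial paths and so do not affect this computation.  The resulting description of $Z(\Lambda)$ matches the image of $z$ exactly.  The main obstacle will be the careful verification of $\alpha(\phi) = \bigl(F_n(\phi)\bigr)_n$, which requires threading through both the presheaf-to-sheaf step used to define $\sO_{\Spec T}$ and the identification of the local stalks at each discrete point with $k$.
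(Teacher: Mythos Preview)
Your proposal is correct and follows the same approach as the paper, which gives only a two-sentence sketch: it identifies $\End_T(U)\cong k^{\pi_0(Q)}$ with the center of the path algebra and points to \rf{prime ideals} for the vertical maps (the quotient $\Lambda\to\Lambda/J(\Lambda)$ and its idempotent section). Your write-up simply fills in the details the paper omits; the one point you flag as an obstacle, the identification $\alpha(\phi)=(F_n(\phi))_n$, is likewise left implicit in the paper.
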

\begin{proof}
 It is easy to see that $\End_T(U) \cong k^{\pi_0(Q)}$ is the center of the path algebra $kQ$; here $\pi_0$ denotes the set of equivalence classes of vertices in $Q$ generated by the existence of arrows between them.  The vertical maps come from \rf{prime ideals}.
\end{proof}

\p{} We note that for tensor triangulated categories $T = D^b(X)$ induced from a scheme $X$, the algebra $A(T)$ can be quite unpleasant, and does not necessarily recover $X$.  For example, let $k$ be an algebraically closed field, and let $T := D^b(\PP^1_k)$.  Then  $T(k) = \PP^1(k)$; that is, the only tensor functors from $T$ to $D^b(k)$ are given by restriction to a $k$-point.  The algebra $A(T)$ is then easily seen to be
\[ A(T) = \prod_{p \in \PP^1(k)} k_p,\]
the direct product of the skyscraper sheaves.  Indeed, it suffices to show that if $x,y$ are in $\mathbb P^1(k)$, then there are no non-zero natural transformations between the corresponding functors $x^*$ and $y^*$.  

Suppose on the contrary that $\phi$ is a non-zero natural transformation from $x^*$ to $y^*$.  Then there is a coherent sheaf $\mathcal F$ such that both fibres $x^*\mathcal F$ and $y^*\mathcal F$ are non-zero, and $\phi_\mathcal F$ is a non-zero map between them.  In fact since $D^b(\mathbb P^1)$ is generated by $\mathcal O$ and $\mathcal O(1)$, we may assume that $\mathcal F$ is a line bundle.  In this case the non-zero map $\phi_\mathcal F$ must be an isomorphism between the fibres $x^*\mathcal F$ and $y^*\mathcal F$.

Now let $\mathcal F':=y_*y^*\mathcal F$.  Then we have $x^*\mathcal F'=0$ but $y^*\mathcal F\rightarrow y^*\mathcal F'= y^*y_*y^*\mathcal F$ is a non-zero map.  Hence we obtain a commutative diagram, by the naturality of $\phi$:

\hskip 2.4in\xymatrix{x^*\mathcal F \ar[r]\ar[d]_-{\phi_\mathcal F}^-\cong & x^*\mathcal F'=0 \ar[d]_-{\phi_{\mathcal F'}}\\ y^*\mathcal F\ar[r]^-{\neq 0} & y^*\mathcal F'.}

But this implies that $\phi_\mathcal F=0$, a contradiction.

Thus for triangulated tensor categories coming from algebraic geometry, Balmer's construction is much better than ours. 
It would be interesting to find a functorial construction that combines the good features of both and to prove a reconstruction theorem that generalizes simultaneously \rf{thm:recovery} and \cite{balmer_spectrumtt}.

\bibliographystyle{plain}
\bibliography{mybibli}

\end{document}